\documentclass[10pt]{article}

\usepackage[nottoc,notlot,notlof]{tocbibind}
\usepackage{amsmath,amssymb,amsfonts,amsthm}
\usepackage{latexsym}
\usepackage{tikz}
\usepackage[normalem]{ulem}
\usetikzlibrary{automata,positioning}
\usetikzlibrary{chains,fit,shapes}
\usetikzlibrary{calc}
\usetikzlibrary{arrows}
\usepackage{float}
\usetikzlibrary{positioning,calc}
\usetikzlibrary{graphs}
\usetikzlibrary{graphs.standard}
\usetikzlibrary{arrows,decorations.markings}
\usepackage{multicol}
\usepackage{xcolor}
\setcounter{MaxMatrixCols}{16}

\if@thmsec
  	\newtheorem{theorem}{Theorem}[section]
\else
   \newtheorem{theorem}{Theorem}
\fi

\newtheorem{lemma}[theorem]{Lemma}

\newtheorem{cor}[theorem]{Corollary}

  \newtheorem{dnt}[theorem]{Definition}

\numberwithin{equation}{section}
\setlength{\textwidth}{16cm} \setlength{\oddsidemargin}{0cm}
\setlength{\evensidemargin}{0cm} \setlength{\footskip}{40pt}
\pagestyle{plain}

\title{On Patterns and Languages in $1$-$11$-Representations of Graphs}

\author{\hspace{1cm} Biswajit Das \ and Ramesh Hariharasubramanian \\ 
{{\footnotesize d.biswajit@iitg.ac.in},\ {\footnotesize  ramesh\_h@iitg.ac.in}}\\{\footnotesize Department of Mathematics, Indian Institute of Technology Guwahati, Guwahati, Assam 781039, India}}

\begin{document}
	\maketitle
	
	\begin{abstract}
A $1$-$11$-representation of a graph $G(V,E)$ is a word over the alphabet $V$ such that two distinct vertices $x$ and $y$ are adjacent if and only if the restricted word $w_{\{x,y\}}$ (obtained from $w$ by deleting all letters except $x$ and $y$) contains at most one occurrence of $xx$ or $yy$. Although every graph admits a $1$-$11$-representation, the repetition patterns that may or must appear in such representations have not been fully studied.

In this paper, we study cube-free and square-free $1$-$11$-representations of graphs. We first show that cubes cannot always be avoided in $1$-$11$-representations of minimum length by providing a graph for which every minimum-length $1$-$11$-representation necessarily contains a cube. We then focus on permutational $1$-$11$-representations, where the representing word is a concatenation of permutations of the vertex set. In this setting, we prove that any cube appearing in a permutational $1$-$11$-representation can be removed without changing the represented graph. As a consequence, every permutational $1$-$11$-representation attaining the permutational $1$-$11$-representation number is cube-free. We further show that this behaviour does not extend to squares by providing a graph for which every permutational $1$-$11$-representation with the minimum number of permutations necessarily contains a square.

Finally, we prove that the language of all $1$-$11$-representations of a given graph is regular. Moreover, we show that the language of all permutational $1$-$11$-representations of a graph is also regular.\\

    \textbf{Keywords:} $1$-$11$-representation, $1$-$11$-representation number, permutational $1$-$11$-representation, cube, square, regular.
	\end{abstract}
\section{Introduction}

An important problem at the interface of graph theory and theoretical computer
science is the efficient encoding of graphs. One approach is to represent a graph
by a word, where the alphabet consists of the vertices of the graph and adjacency
information is recovered from local patterns in the word. Many such encoding
schemes are possible, but those based on simple and uniform rules have proved
particularly useful.

A well-known example is the notion of word-representable graphs. In this notion,
two distinct vertices $x$ and $y$ are adjacent if and only if their corresponding
letters alternate in a word. This representation was first studied systematically
by Kitaev and Pyatkin \cite{Kitaev2008}, building on earlier ideas of Kitaev and Seif \cite{kitaev2008word}. Although this model captures a large class of graphs and has been widely investigated, it does not include all graphs.

 Another generalisation of word-representable graphs is given by $k$-$11$-representations, which were originally suggested by Remmel and later studied in detail by Cheon \textit{et al.} \cite{cheon2019k}. In this representation, an edge of a graph $G$ corresponds to at most $k$ occurrences of the consecutive pattern $11$ in a word representing $G$. As a result, word-representable graphs are exactly the $0$-$11$-representable graphs. It was shown in \cite{cheon2019k} that every graph is $2$-$11$-representable using a concatenation of permutations. Furthermore, $1$-$11$-representations were obtained for the Chv\'{a}tal graph, the Mycielski graph, split graphs, and graphs whose vertex sets can be partitioned into a comparability graph and an independent set \cite{Futorny2024}. More recently,
it was proved in \cite{hefty2025k} that every graph admits a permutational $1$-$11$-representation.

An important problem in the study of word-representable graphs is to determine representations that contain or avoid given patterns. A pattern $\tau = \tau_1\tau_2\cdots \tau_m$ is said to occur in a word
$w = w_1w_2\cdots w_n$ if there exist indices $1 \le i_1 < i_2 < \cdots < i_m \le n$ such that the word
$\tau_1\tau_2\cdots \tau_m$ is order-isomorphic to the subsequence $w_{i_1}w_{i_2}\cdots w_{i_m}$. The study of pattern-avoiding word-representations was introduced in \cite[Section~7.8]{kitaev2015words}. Later, Gao \textit{et al.} \cite{GKZ17-AJC} and Mandelshtam \cite{Mandelshtam19-DMGT} studied word-representable graphs avoiding the $132$-pattern and the $123$-pattern, respectively. Further results on forbidden-pattern characterisations can be found in \cite{takaoka2024forbidden,Damaschke90-incollection,FH21-SIDMA} and \cite[Section~7.4]{BLS99}.

Apart from permutation patterns, combinatorics on words also considers unordered patterns such as squares, cubes, overlaps, and borders. A word $u$ is said to contain a word $v$ as a factor if $u = xvy$, where the words $x$ and $y$ may be empty. A square (respectively, a cube) is obtained by repeating a nonempty word twice (respectively, three times) consecutively. Thus, a word
$w \in \Sigma^*$ contains a square (respectively, a cube) if it can be written in the form $w = s_1XXs_2$ (respectively, $w = s_1XXXs_2$), where $s_1, s_2 \in \Sigma^*$ and $X \in \Sigma^+$. Square-free words have attracted considerable attention in combinatorics on words since the work of Axel Thue \cite{thue1906uber}. This work initiated the study of combinatorics on words and established the existence of infinite square-free words over three-letter alphabets. An example of a word that avoids all non-trivial cubes is the Peano word \cite{Kitaev2004}. Square-free and cube-free word-representations of graphs were introduced in \cite[Section~7.1.3]{kitaev2015words}. It was shown there that every word-representable graph admits a cube-free word-representation, and that trivial square-free word-representations exist for all word-representable graphs except the empty graph on two vertices. It was later proved that non-trivial square-free word-representations exist for every word-representable graph except the empty graph on two vertices \cite{das2026square}.

Since not all graphs are word-representable, it is natural to ask whether non-word-representable graphs admit other representations that avoid repetitions, such as squares or cubes. From a formal-language perspective, it is known that for a word-representable graph the set of all words that represent the graph forms a regular language \cite{fleischmann2024word}. Since $1$-$11$-representations generalise word-representations, it is natural to ask whether the language of all $1$-$11$-representations of a given graph is also regular.

In this paper, we give an affirmative answer to this question for cubes. For word-representable graphs, it is shown in \cite{kitaev2015words} that cubes can always be removed from a word-representation. This property does not extend to $1$-$11$-representations. In particular, we construct a graph for which every minimum-length $1$-$11$-representation necessarily contains cubes, and such cubes cannot be removed without increasing the length of the representation.
We also show that this improves for permutational $1$-$11$-representations. Every graph admits a cube-free permutational $1$-$11$-representation, where the representing word is a concatenation of permutations of the vertex set. However, squares do not follow the same pattern. Although squares can be removed from any word-representation \cite{kitaev2015words,das2026square}, this is again not true for $1$-$11$-representations. The same graph used above also shows that square-free words cannot be obtained in minimum-length $1$-$11$-representations. For permutational $1$-$11$-representations, cube-free representations do exist. However, we give a graph for which every permutational $1$-$11$-representation with the minimum number of permutations necessarily contains a square. These results show that repetition avoidance is different for $1$-$11$-representations than for word-representations.

\section{Preliminaries}

All graphs considered in this paper are finite, simple, and undirected. For a graph $G(V,E)$, let $V^{+}$ denote the set of all nonempty words over the alphabet $V$.

 \begin{dnt}[\cite{kitaev2015words}, Definition 3.0.3.] Suppose that $w$ is a word and $x$ and $y$ are two distinct letters in $w$. The letters $x$ and $y$ alternate in $w$ if, after deleting all letters but the copies of $x$ and $y$ from $w$, either a word $xyxy\cdots$ (of even or odd length) or a word $yxyx\cdots$(of even or odd length) is obtained. If $x$ and $y$ do not alternate in $w$, then these letters are called non-alternating letters in $w$. 
 \end{dnt}

In a word $w$, if $x$ and $y$ alternate, then $w$ contains $xyxy\cdots$ or $yxyx\cdots$ (odd or even length) as a subword.

\begin{dnt}[\cite{kitaev2015words}, Definition 3.0.5.]
 A simple graph $G(V, E)$ is \textit{word-representable} if there exists a word $w$ over the alphabet $V$ such that the two distinct letters $x$ and $y$ alternate in $w$ if and only if $x$ and $y$ are adjacent in $G$. If a word $w$ \textit{represents} $G$, then $w$ contains each letter of $V(G)$ at least once.
\end{dnt}
For a word $w$, $w_{\{x_1, \cdots, x_m\}}$ denotes the word formed by removing all letters from $w$ except the letters $x_1, \ldots, x_m$.
\begin{dnt}[\cite{cheon2019k}]
A word $w\in V^{+}$ is called a \emph{$1$-$11$-representation} of a graph
$G(V,E)$ if, for every pair of distinct vertices $x,y\in V$, the vertices $x$
and $y$ are adjacent in $G$ if and only if the restricted word $w_{\{x,y\}}$
contains at most one factor of the form $xx$ or $yy$.

Equivalently, $x$ and $y$ are non-adjacent in $G$ if and only if the restricted
word $w_{\{x,y\}}$ contains at least two occurrences of $xx$, or at least two
occurrences of $yy$, or at least one occurrence of each.
\end{dnt}

\begin{dnt}[\cite{hefty2025k}]
   The \emph{$1$-$11$-representation number} of $G$, denoted by $\mathcal{R}(G)$, is
the minimum length of a word $w$ that $1$-$11$-represents $G$. 
\end{dnt}

\begin{dnt}[\cite{cheon2019k}]
A $1$-$11$-representation $w$ of $G$ is called \emph{permutational} if $w$ can be
written as a concatenation of permutations of $V$. In this case, we say that $G$
admits a permutationally  $1$-$11$-representation.    
\end{dnt}

\begin{dnt}[\cite{hefty2025k}]
$\mathcal{R}_\pi(G)$ denotes the permutational $1$-$11$-representation number of $G$, the minimum number of permutations in any permutational $1$-$11$-representation of $G$. 
\end{dnt}

\begin{theorem}[\cite{hefty2025k}, Theorem 2.2]
     Let $G(V, E)$ be a graph. Then there is a word $w$ over alphabet $V$
permutationally $1$-$11$-representing G.
\end{theorem}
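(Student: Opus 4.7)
The plan is to reformulate the $1$-$11$ condition as a statement about ``sign changes'' in a sequence of permutations, and then to build such a sequence from a geodesic in the Cayley graph of $S_n$, modified by short local ``bumps'' inserted for each non-edge of $G$. For any permutational word $w = \pi_1 \pi_2 \cdots \pi_k$, the restriction $w_{\{x, y\}}$ consists of $k$ consecutive length-$2$ blocks, each equal to $xy$ or $yx$, and a factor $xx$ or $yy$ can arise only at the boundary between two successive blocks, precisely when the relative order of $x$ and $y$ flips from $\pi_i$ to $\pi_{i+1}$. Hence the total number of $xx$ and $yy$ factors in $w_{\{x, y\}}$ equals the number of sign changes of the pair $\{x, y\}$ along the sequence $\pi_1, \ldots, \pi_k$, and the defining condition for a permutational $1$-$11$-representation becomes: at most one sign change on every edge of $G$, and at least two on every non-edge.

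Next I would fix an ordering $V = \{v_1, \ldots, v_n\}$ and take any geodesic $\pi_0, \pi_1, \ldots, \pi_L$, with $L = \binom{n}{2}$, from the identity $v_1 v_2 \cdots v_n$ to the reverse $v_n v_{n-1} \cdots v_1$ in the Cayley graph of $S_n$ with adjacent transpositions as generators. Since the identity has $0$ inversions and the reverse has $\binom{n}{2}$, and each adjacent-transposition step changes the inversion count by exactly $1$, every unordered pair $\{u, v\}$ is swapped at a unique step $t_{u, v}$: the vertices $u$ and $v$ are adjacent in $\pi_{t_{u, v}}$, and $\pi_{t_{u, v}+1}$ is obtained from $\pi_{t_{u, v}}$ by swapping them. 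Thus along the geodesic every pair contributes exactly one sign change, so a direct concatenation would represent $K_n$ rather than $G$.

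To inject the missing sign changes only on non-edges, for each non-edge $\{u, v\}$ of $G$ I would insert a ``bump'' into the sequence immediately after $\pi_{t_{u, v}+1}$, namely the block $\pi_{t_{u, v}}, \pi_{t_{u, v}+1}$. Since these two permutations differ only by the adjacent swap of $u$ and $v$, the bump introduces exactly two extra sign changes on the pair $\{u, v\}$ and none on any other pair. Because bumps for distinct non-edges are inserted at distinct positions and each modifies only the sign of its own pair, they do not interact. Concatenating the resulting extended sequence of permutations gives a word $w$ in which $w_{\{x, y\}}$ has exactly one $xx$ or $yy$ factor for every edge $\{x, y\}$ and $1 + 2 = 3$ such factors for every non-edge, so $w$ permutationally $1$-$11$-represents $G$. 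The main technical point is the inversion-counting argument that guarantees the existence and uniqueness of the swap step $t_{u, v}$ for every pair; this provides the clean base count of one sign change per pair, which can then be perturbed upward to three on non-edges.
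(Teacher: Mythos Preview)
The paper does not supply its own proof of this statement; it is quoted in the Preliminaries as Theorem~2.2 of \cite{hefty2025k} and used as background. So there is no in-paper argument to compare your proposal against.

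That said, your argument is correct and self-contained. The reformulation is exactly right: in a concatenation $w=\pi_1\cdots\pi_k$ of permutations of $V$, the restriction $w_{\{x,y\}}$ is a concatenation of $k$ blocks each equal to $xy$ or $yx$, and an $xx$ or $yy$ factor appears precisely at a block boundary where the relative order of $x$ and $y$ flips. Hence the $1$-$11$ condition becomes ``at most one flip on edges, at least two flips on non-edges.'' Your base sequence---a reduced expression for the longest element of $S_n$ in adjacent transpositions---does give exactly one flip per pair, by the standard inversion-count argument. The bump $\pi_{t_{u,v}},\pi_{t_{u,v}+1}$ inserted after $\pi_{t_{u,v}+1}$ introduces the two transitions $\pi_{t_{u,v}+1}\to\pi_{t_{u,v}}\to\pi_{t_{u,v}+1}$, each of which changes only the relative order of $u$ and $v$, so the flip count for $\{u,v\}$ rises from $1$ to $3$ while every other pair is untouched. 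Distinct non-edges have distinct swap indices $t_{u,v}$, so the bumps are inserted at distinct locations and do not interfere. The resulting word has exactly one flip on every edge and exactly three on every non-edge, hence is a permutational $1$-$11$-representation of $G$.

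For context, the construction in \cite{hefty2025k} is in the same spirit: it also starts from a maximal chain in the weak Bruhat order (equivalently, your geodesic) and perturbs it locally on non-edges. Your presentation via sign changes and Cayley-graph geodesics is a clean way to package the idea.
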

In the following, we present the definitions of some patterns and the known results for these patterns.
 \begin{dnt}%\textit{(\cite{kitaev2015words}, Definition 7.1.5.)}
    A \textit{square} (resp., \textit{cube}) in a word is two (resp., three) consecutive equal factors. A word $w\in \Sigma^*$ contains a square (resp., cube) if $w=s_1XXs_2$ (resp., $w=s_1XXXs_2$), where $s_1,s_2\in \Sigma^*,X\in \Sigma^+$.
\end{dnt}
\begin{theorem}[\cite{kitaev2015words}, Theorem 7.1.9.]\label{tmcube}
    For any word-representable graph $G$, there exists a cube-free word representing $G$.
\end{theorem}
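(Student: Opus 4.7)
The plan is to iteratively remove cubes from any word-representation without changing the represented graph. If $w$ is a word-representation of $G$ that contains a cube, write $w = s_1 X X X s_2$ with $X \in V^{+}$ and set $w' = s_1 X X s_2$. I would show that $w'$ still represents $G$; since $|w'| < |w|$ and every vertex appearing in $X$ still appears (at least twice) in $w'$, iterating this step terminates after finitely many rounds with a cube-free word-representation of $G$.

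To establish that $w$ and $w'$ represent the same graph, I would verify, for every pair of distinct vertices $x, y \in V$, that $x$ and $y$ alternate in $w_{\{x,y\}}$ if and only if they alternate in $w'_{\{x,y\}}$. The argument splits on how $x$ and $y$ interact with $X$. If neither appears in $X$, then $w_{\{x,y\}} = w'_{\{x,y\}}$ and there is nothing to prove. If exactly one of them, say $x$, appears in $X$ with multiplicity $a \geq 1$ while $y$ does not, then in $w$ the $XXX$ block contributes $3a \geq 3$ consecutive $x$'s to the restriction (no $y$'s occur inside $XXX$), yielding an $xx$ factor; the same happens in $w'$ with $2a \geq 2$ consecutive $x$'s. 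Hence $x, y$ are non-alternating in both words.

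The main obstacle is the case in which both $x$ and $y$ appear in $X$. Set $u = X_{\{x,y\}}$; then $w_{\{x,y\}}$ and $w'_{\{x,y\}}$ differ only by replacing a single occurrence of $u^{3}$ with $u^{2}$, in the same position, with identical restricted prefix and suffix. I would handle this by a case analysis on the form of $u$: if $u$ itself contains an $xx$ or $yy$ factor, so do $u^2$ and $u^3$; if $u$ is alternating but begins and ends with the same letter, that is $u$ has the form $(xy)^{k} x$ or $(yx)^{k} y$, then the junction between consecutive copies of $u$ forces an $xx$ or $yy$ factor into both $u^2$ and $u^3$; otherwise $u$ has the form $(xy)^{k}$ or $(yx)^{k}$, and here $u^2$ and $u^3$ are both internally alternating and share the same first and last letters, so their interfaces with the surrounding restricted prefix and suffix are identical, preserving the alternation of the full restriction. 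This completes the single reduction step, and induction on $|w|$ yields the theorem.
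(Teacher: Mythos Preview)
The paper does not supply a proof of this statement; it is quoted in the preliminaries as Theorem~7.1.9 of \cite{kitaev2015words} and used as a known fact. Your argument is correct and is exactly the standard one from that reference: remove one copy of $X$ from any cube $XXX$ and verify, by the case split you describe on $u=X_{\{x,y\}}$, that alternation of every pair is preserved; iterate on length. One cosmetic remark: your parenthetical that ``every vertex appearing in $X$ still appears (at least twice) in $w'$'' is not quite the relevant point---what you need is that every vertex of $V$ still occurs in $w'$, which holds because vertices outside $X$ lie in $s_1$ or $s_2$ and are untouched, while vertices in $X$ survive in the remaining $XX$. With that minor clarification the proof is complete.
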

\begin{theorem} [\cite{das2026square}, Theorem 2.] %\label{tm2}
     If $G$ is a connected graph and $w$ is a word representing $G$ where $w$ contains at least one square, then there exists a square-free word $w'$ that represents $G$.
\end{theorem}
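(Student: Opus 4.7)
My plan is to prove the theorem by strong induction on $|w|$. The base case, $w$ square-free, is trivial with $w' = w$. For $|V(G)|\le 2$, the connected graph $G$ is either a single vertex or $K_2$, both of which admit obvious square-free representations of length at most two. So assume $|V(G)|\ge 3$ and that $w$ contains at least one square. The goal of the inductive step is to build a strictly shorter word that still represents $G$, after which the induction hypothesis supplies the desired square-free representation.

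Pick a factorisation $w = s_1XXs_2$ with $|X|$ minimal among the squares of $w$. The first candidate for a shorter representation is $\widetilde{w} = s_1Xs_2$, obtained by deleting a single copy of $X$. I would then verify when $\widetilde{w}$ represents $G$ by comparing the restricted words $\widetilde{w}_{\{x,y\}}$ and $w_{\{x,y\}}$ for each pair of vertices. Pairs with neither letter in $X$ are unaffected; pairs with both letters in $X$ can be handled via the minimality of $|X|$, which ensures that $X_{\{x,y\}}$ itself contains no shorter square, so deleting one copy of $X_{\{x,y\}}$ from the embedded $X_{\{x,y\}}X_{\{x,y\}}$ block preserves the alternation status of $w_{\{x,y\}}$. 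The delicate case is when exactly one of $x,y$ lies in $X$: the deletion then removes occurrences of one letter but not the other, and may toggle alternation of the pair.

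When the naive deletion fails in the delicate case, my plan is to use the connectedness of $G$ to build a compensated rewriting: instead of merely deleting a copy of $X$, delete it and simultaneously insert a short compensating factor elsewhere in $w$, designed to restore the alternation of each affected pair $\{x,y\}$ with $x\in X$, $y\notin X$, while leaving all other pairs undisturbed. Connectedness of $G$ guarantees the existence of paths between letters of $X$ and letters of $V\setminus X$, and these paths encode the sequences that can serve as the compensating factor. Provided the compensation has length strictly less than $|X|$, the resulting word is strictly shorter than $w$ and the induction hypothesis closes the argument.

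The main obstacle I expect is precisely the construction and analysis of this compensation in the delicate case. Concretely, one must show that (i) a compensating factor of length less than $|X|$ exists for every configuration of broken pairs, (ii) it can be inserted at a position that does not create a new square of length at least $|X|$, and (iii) it does not accidentally alter alternation for any pair other than those it is designed to fix. This compensation lemma is likely the combinatorial heart of the proof and will require a careful case analysis of how alternation and square-freeness behave under single-letter insertions and deletions; I expect it to be the most technically demanding step.
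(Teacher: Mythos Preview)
The paper does not actually prove this theorem; it is quoted in the preliminaries as a known result from \cite{das2026square}. The surrounding context (Lemma~\ref{lmk} and its corollary) indicates the intended route: every connected word-representable graph has a $k$-uniform representation with $k$ equal to its representation number, and Lemma~\ref{lmk} asserts that any such $k$-uniform word is automatically square-free. So the square-free representant is produced globally, not by locally excising squares from a given $w$.

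Your inductive square-removal scheme, by contrast, has genuine gaps even before the ``delicate case'' you flag. In the case where both $x$ and $y$ occur in $X$, you claim that minimality of $|X|$ forces $X_{\{x,y\}}$ to be square-free. This is false: $X_{\{x,y\}}$ is a \emph{restriction} (scattered subword), not a factor, and a square in it need not correspond to any square factor of $w$. For a concrete failure, take $w=xyxxyx$ with $X=xyx$; here $|X|$ is minimal, $x$ and $y$ are non-adjacent (since $w_{\{x,y\}}=xyxxyx$ contains $xx$), yet deleting one copy of $X$ gives $xyx$, which is alternating and now wrongly declares $x,y$ adjacent. So the ``both letters in $X$'' case already breaks, independent of any compensation.

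The compensation idea for the one-letter case is also not a proof: you have not exhibited the compensating factor, shown it has length strictly less than $|X|$, or argued it repairs exactly the broken pairs without disturbing others. Connectedness gives you paths, but paths do not obviously translate into short insertable words with the required alternation properties. If you want to rescue the local approach you would need a much more structured rewriting lemma; the cleaner path is the uniform-representation argument indicated by Lemma~\ref{lmk}.
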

\begin{theorem}[\cite{das2026square}, Theorem 3.]\label{tm3}
Suppose $G$ is a disconnected word-representable graph, and $G_i$, $1\leq i\leq n$, $n\in \mathbb{N}$ are the connected components of $G$. Let $w_i$ be the square-free word-representation of $G_i$, and $G_1$ be a non-empty word-representable graph. Then the word $w=w_1\setminus l(w_1)w_2\cdots w_nl(w_1)\sigma(w_n)\cdots\sigma(w_2)$ $\sigma(w_1)\setminus l(w_1)\sigma(w_2)\cdots$ $\sigma(w_n)l(w_1)$ represents $G$ and $w$ is a square-free word. Here, $l(w_1)$ denotes the last letter of $w_1$ and $w_1\setminus l(w_1)$ denotes the word obtained by removing the last letter from $w_1$. 
\end{theorem}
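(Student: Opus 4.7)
The plan is to prove the two conclusions separately: that $w$ word-represents $G$, and that $w$ is square-free. For the representation, I would compute the restricted word $w_{\{x,y\}}$ for each pair of distinct vertices and divide the analysis by whether $x$ and $y$ lie in the same component. When $x, y \in V(G_i)$, letters of the other components disappear, and the restriction reduces to a specific combination of $(w_i)_{\{x,y\}}$, its reverse $(\sigma(w_i))_{\{x,y\}}$, and possibly the separator $l(w_1)$ when $l(w_1) \in \{x, y\}$; using the hypothesis that $w_i$ represents $G_i$ together with the fact that reversal preserves alternation, one checks that $w_{\{x,y\}}$ alternates iff $(w_i)_{\{x,y\}}$ alternates. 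The sub-cases $i = 1$ and $i \ge 2$ require slightly different arguments because $l(w_1) \in V(G_1)$. When $x$ and $y$ lie in different components, the block structure of $w$ places the occurrences of $x$ and the occurrences of $y$ in separate contiguous runs inside the restriction, producing at least one repeated consecutive letter and thereby ruling out alternation.

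For square-freeness, every $w_i$ (and hence its reverse $\sigma(w_i)$) is square-free by hypothesis, so any square in $w$ must straddle at least one boundary between two of the constituent segments. I would enumerate the finitely many boundary types, using the structural facts that $l(w_1)$ is a single identifiable letter of $V(G_1)$ and that within each half of $w$ the segments $w_i$ (or $\sigma(w_i)$) appear in mutually reversed orders. At each boundary one checks that a hypothetical square $XX$ straddling it would force either a clash between letters belonging to different components (impossible since the components share no vertices) or a square inside some $w_i$ or $\sigma(w_i)$, contradicting the hypothesis.

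The main obstacle will be the non-adjacent case across different components when one of the vertices coincides with $l(w_1)$. In that situation $l(w_1)$ appears both inside $w_1$ (or $\sigma(w_1)$) and as one or more isolated separator copies, so its exact positions must be tracked relative to the block containing the other vertex. I would dispatch this by subdividing into sub-cases based on whether $x = l(w_1)$ or $y = l(w_1)$ and on the components containing the remaining vertex, and in each sub-case I would exhibit an explicit pair of consecutive equal letters in $w_{\{x,y\}}$ that breaks alternation. A secondary obstacle is confirming that the two inserted copies of $l(w_1)$ do not themselves participate in a square with the surrounding letters, which I would rule out by showing that the immediate neighbours of each $l(w_1)$ belong to distinct components and therefore cannot match.
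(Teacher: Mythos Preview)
This theorem appears in the paper only as a quoted result from \cite{das2026square} in the Preliminaries; the present paper gives no proof of it, so there is no in-paper argument to compare your proposal against.

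As a standalone plan your outline is the natural one, but one step is understated. In the same-component adjacent case you rely on ``reversal preserves alternation'' to conclude that $w_{\{x,y\}}$ alternates whenever $(w_i)_{\{x,y\}}$ does. That inference is not automatic: when an alternating block is concatenated with its reversal, the last letter of the block equals the first letter of the reversal, so a doubled letter is created at the seam (for instance $xyxy$ followed by $yxyx$). Whether $w_{\{x,y\}}$ actually alternates therefore depends on exactly how the blocks $w_i$, $\sigma(w_i)$ and the separators $l(w_1)$ are interleaved, and in particular on what $\sigma$ denotes---the present paper never defines it, and your proposal assumes it is reversal. To make the argument go through you would need to track the seam letters explicitly for each block boundary (not just for the square-freeness half), and verify that the specific ordering in $w$ is arranged so that no adjacent pair acquires a doubled letter. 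Your plan already does this kind of boundary bookkeeping for square-freeness; it needs the same care for the representation claim.
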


\begin{lemma}[\cite{das2026square}, Lemma 4.]\label{lmk}
     If $G$ is a connected word-representable graph and the representation number of $G$ is $k$, then every $k$-uniform word representing $G$ is square-free.
\end{lemma}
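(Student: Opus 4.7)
The proof will be by contradiction. Suppose that $w$ is a $k$-uniform word-representation of $G$ and that $w = s_1 XX s_2$ for some $X \in V^{+}$; the plan is to delete one copy of $X$ to obtain a $(k-c)$-uniform representation of $G$ for some $c \geq 1$, contradicting the minimality of the representation number.

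The first task is to extract structural information about $X$. If $a$ is a letter appearing in $X$ and $b$ is a letter not appearing in $X$, then $X_{\{a,b\}}$ is a nonempty power of $a$, and so the factor $XX$ forces an occurrence of $aa$ in $w_{\{a,b\}}$; hence $a$ and $b$ are non-adjacent in $G$. Because $G$ is connected and $X$ is nonempty, this forces the alphabet of $X$ to equal $V$. Next, for any pair of adjacent vertices $a, b$, the word $w_{\{a,b\}}$ is alternating, and it contains the factor $\alpha \alpha$ where $\alpha = X_{\{a,b\}}$; this forces $\alpha$ itself to be alternating and to begin and end with distinct letters. In particular $|\alpha|$ is even, so $a$ and $b$ occur equally often in $X$. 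Connectedness then propagates this equality across $V$, giving a constant $c \geq 1$ such that every letter of $V$ appears exactly $c$ times in $X$.

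The second task is to set $w' = s_1 X s_2$, which by construction is $(k-c)$-uniform with $k - c < k$. It remains to verify that $w'$ still represents $G$. For any pair of letters $a, b$, set $\beta = (s_1)_{\{a,b\}}$ and $\gamma = (s_2)_{\{a,b\}}$, so that $w_{\{a,b\}} = \beta \alpha^2 \gamma$ and $w'_{\{a,b\}} = \beta \alpha \gamma$. For adjacent $a, b$ the structural analysis of $\alpha$ above immediately gives alternation of $\beta \alpha \gamma$. For non-adjacent $a, b$, I would split on whether $\alpha$ is itself alternating: in either sub-case, every consecutive repetition inside $\beta \alpha^2 \gamma$ must lie within $\alpha$, $\beta$, or $\gamma$, or at the boundary between $\beta$ and $\alpha$, or between $\alpha$ and $\gamma$, and each such repetition survives in $\beta \alpha \gamma$. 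The key point is that the junction between the two consecutive copies of $\alpha$ cannot itself introduce a repetition, because $|\alpha| = 2c$ is even and any alternating word of even length automatically begins and ends with different letters.

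The main obstacle is precisely this last case analysis: one must rule out the possibility that the only non-alternation in $w_{\{a,b\}}$ arises between the two consecutive copies of $\alpha$, which would allow $w'$ to create a spurious edge. The evenness $|\alpha| = 2c$ established in the first task is exactly what closes this loophole and yields the desired $(k-c)$-uniform representation of $G$, contradicting that the representation number of $G$ is $k$.
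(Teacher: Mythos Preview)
The paper does not prove this lemma; it is quoted verbatim as Lemma~4 of \cite{das2026square} and used only as background. There is therefore no proof in the present paper to compare your proposal against.

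That said, your argument is correct and is in fact the natural one. The two structural facts you extract --- that the alphabet of $X$ must be all of $V$ (via connectedness and the observation that a letter in $X$ cannot be adjacent to a letter outside $X$), and that every letter occurs the same number $c$ of times in $X$ (via the even-length alternation forced on $X_{\{a,b\}}$ for adjacent $a,b$, propagated through the connected graph) --- are exactly what is needed. Your handling of the non-adjacent case is also sound: once you know $|\alpha|=2c$ for \emph{every} pair (not just adjacent ones, since each letter occurs $c$ times in $X$), the sub-case split on whether $\alpha$ itself alternates cleanly disposes of the only possible loophole at the $\alpha$--$\alpha$ junction. One small phrasing issue: in the non-alternating sub-case for $\alpha$ it is not literally true that ``every consecutive repetition \ldots\ must lie within $\alpha$, $\beta$, or $\gamma$, or at the boundary'' --- a repetition could sit at the $\alpha$--$\alpha$ junction too --- but this is harmless since $\alpha$ already contains a repetition that survives in $\beta\alpha\gamma$.
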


\begin{cor}[\cite{das2026square}, Corollary 1]
     For a connected word-representable graph $G$, every minimal-length word representing $G$ is square-free.
\end{cor}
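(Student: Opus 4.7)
The plan is to argue by contradiction: suppose $w$ is a word of minimum length representing the connected word-representable graph $G$ and that $w$ contains a square. I aim to deduce a contradiction with the minimality of $|w|$, so that every minimum-length representation must in fact be square-free.

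I would first dispose of single-letter squares. If $w = s_1\, xx\, s_2$ for some vertex $x$, then for every other vertex $y$, the restriction $w_{\{x,y\}}$ inherits the factor $xx$ (no letter $y$ can appear between the two consecutive copies of $x$), so $x$ and $y$ fail to alternate in $w$. Hence $x$ is adjacent to no other vertex, i.e., $x$ is isolated in $G$. Since $G$ is connected with $|V(G)| \ge 2$ (the case $|V(G)| \le 1$ being trivial), this is a contradiction, and $w$ contains no factor of the form $xx$.

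For longer squares $XX$ with $|X| \ge 2$, the plan is to invoke Lemma~\ref{lmk}. The key intermediate claim is that every minimum-length representation of a connected word-representable graph $G$ must be $k$-uniform, where $k$ is the representation number of $G$. Granting this, Lemma~\ref{lmk} applied to $w$ immediately yields that $w$ is square-free, contradicting our assumption that $w$ contains a square.

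The main obstacle is the intermediate $k$-uniformity claim. The strategy is to use the basic fact that if two letters $x, y$ alternate in $w$, then $\bigl||w|_x - |w|_y\bigr| \le 1$, and to propagate this bound along the edges of the connected graph $G$. Combined with the existence of a $k$-uniform representation of length $k|V(G)|$ and the minimality of $|w|$, a careful counting argument should pin down the multiplicities $|w|_v$ exactly and rule out non-uniform minimum-length representations. This is the technical crux of the proof; once it is in hand, the corollary follows by the two-step argument above.
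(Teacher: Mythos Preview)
The paper does not give its own proof of this corollary; it is quoted from \cite{das2026square} as background. Regardless, your proposal has a genuine gap: the intermediate claim that every minimum-length representation of a connected $G$ is $k$-uniform (with $k$ the representation number) is false. For the path $P_3$ on vertices $x,y,z$ with centre $y$, the word $w=xyxz$ represents $P_3$ (check: $w_{\{x,y\}}=xyx$ and $w_{\{y,z\}}=yz$ alternate, while $w_{\{x,z\}}=xxz$ does not), has length $4$, and is minimal since any length-$3$ word on three letters is a permutation and represents $K_3$. Yet $k(P_3)=2$ (for instance $xyxzyz$ is a $2$-uniform representation), so a $k$-uniform word has length $6>4$, and $w$ itself has multiplicities $2,1,1$. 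Propagating the bound $\bigl||w|_x-|w|_y\bigr|\le 1$ along edges only controls multiplicity differences up to the diameter of $G$, not down to $0$, so the ``careful counting argument'' you envisage cannot force uniformity, and Lemma~\ref{lmk} is not applicable to $w$.

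The route that actually works is direct: if $w=uXXv$ represents a connected $G$, one shows that $w'=uXv$ still represents $G$, so a word containing a square is never of minimum length. Preservation of adjacency is straightforward. For a non-adjacent pair $a,b$, the only dangerous case is when the sole occurrence of $aa$ or $bb$ in $w_{\{a,b\}}$ lies at the internal boundary of $X_{\{a,b\}}X_{\{a,b\}}$; this forces $X_{\{a,b\}}$ to be alternating and to begin and end with the same letter, hence $|X|_a\ne|X|_b$. But connectivity implies every vertex occurs in $X$ (otherwise an edge $\{c,d\}$ with $c$ appearing in $X$ and $d$ not would yield $cc$ in $w_{\{c,d\}}$, contradicting $c\sim d$), and for any adjacent pair $x,y$ the block $X_{\{x,y\}}$ must begin and end with different letters (else $w_{\{x,y\}}$ would fail to alternate at the $X$--$X$ boundary), giving $|X|_x=|X|_y$. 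Chaining this equality along a path from $a$ to $b$ yields $|X|_a=|X|_b$, a contradiction. Your treatment of single-letter squares is correct but is subsumed by this argument.
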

\begin{theorem}[\cite{fleischmann2024word},Theorem 26]
     The language of words representing a given graph is regular.
\end{theorem}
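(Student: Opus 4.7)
The plan is to exhibit the language as a finite intersection of regular languages. Let $V=V(G)$; recall that $w\in V^{+}$ represents $G$ if and only if (i) every vertex of $V$ appears in $w$, (ii) for every edge $\{x,y\}\in E(G)$ the restriction $w_{\{x,y\}}$ contains no factor $xx$ or $yy$ (equivalently, $x$ and $y$ alternate in $w$), and (iii) for every non-edge $\{x,y\}$ with $x\neq y$, $w_{\{x,y\}}$ contains at least one factor $xx$ or $yy$. I will show that each of these conditions defines a regular language over $V$, whence their intersection is regular.

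Condition (i) is recognised by a DFA whose states are subsets $S\subseteq V$, with the transition on letter $v$ sending $S$ to $S\cup\{v\}$ and accepting exactly when $S=V$. For each unordered pair $\{x,y\}$ consider the homomorphism $h_{xy}\colon V^{*}\to\{x,y\}^{*}$ given by $h_{xy}(x)=x$, $h_{xy}(y)=y$, and $h_{xy}(z)=\varepsilon$ for $z\notin\{x,y\}$; note that $h_{xy}(w)=w_{\{x,y\}}$. The set $A_{xy}\subseteq\{x,y\}^{*}$ of words containing no factor $xx$ or $yy$ is regular (a three-state DFA suffices), and so is its complement. Since regular languages are closed under inverse homomorphism, both $h_{xy}^{-1}(A_{xy})$ and its complement in $V^{*}$ are regular. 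The language of representations of $G$ is therefore the intersection of the language realising condition (i) with $h_{xy}^{-1}(A_{xy})$ for each edge $\{x,y\}\in E(G)$ and $h_{xy}^{-1}(\{x,y\}^{*}\setminus A_{xy})$ for each non-edge. A finite intersection of regular languages being regular, the result follows.

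The argument has no substantive obstacle: the only subtle step is verifying that ``$x$ and $y$ alternate in $w$'' is equivalent to ``$w_{\{x,y\}}$ contains no factor $xx$ or $yy$'', which is immediate from the definition of alternation. Taking the product of all the above DFAs yields an explicit recognising DFA with at most $2^{|V|}\cdot 4^{\binom{|V|}{2}}$ states, by tracking the subset of letters already read together with, for each pair, the last letter of the pair seen and a flag indicating whether a repetition has already occurred in the restriction. The construction will serve as a template for the analogous $1$-$11$-statement proved later in the paper, where the local ``alternation'' flag for each pair is replaced by one counting occurrences of $xx$ and $yy$ up to two.
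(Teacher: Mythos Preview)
The statement is quoted from \cite{fleischmann2024word} and is not re-proved in the present paper, so there is no proof here to compare against directly. Your argument is correct, and it is precisely the template the paper adopts for its own $1$-$11$ analogue (Theorem~\ref{thm1}): for each unordered pair $\{a,b\}$ build a small DFA recognising the pairwise constraint, take complements for non-edges, and intersect over all pairs. The only cosmetic difference is that the paper draws an explicit six-state DFA for the $1$-$11$ pairwise language rather than invoking closure under inverse homomorphism, and it folds the ``every vertex occurs'' requirement into the adjacency DFA (the accepting states $q_2,q_4$ are reachable only after both $a$ and $b$ have been read) instead of handling it by a separate subset-tracking automaton.
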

In this paper, the notation $w=a_1a_2\cdots a_n$ indicates that the word $w$ contains the factors $a_1,a_2,\ldots,a_n$, where each $a_i$ is a possibly empty word.

\section{Cube-free 1-11 representation of graphs}
In this section, we study cube-free $1$-$11$-representations of graphs.
While every graph admitting a $1$-$11$-representation may contain cubes in its
representing words, it is natural to ask whether such cubes are unavoidable or
can be eliminated without changing the represented graph. Our goal is to
understand when cube-free representations exist, and how cube structures behave
inside permutational $1$-$11$-representations.

We begin with a small disconnected graph to demonstrate that cube-free
$1$-$11$-representations need not exist at the minimum representation length.
This example shows that cube-freeness is a nontrivial restriction and motivates
the need for structural results in the permutation setting.

\begin{theorem}
Suppose that $G$ is a disconnected graph consisting of a component $K_3$ and an
isolated vertex $v$. Then $\mathcal{R}(G)=6$, and there does not exist any
cube-free word $w$ of length $6$ that is a $1$-$11$-representation of $G$.
\end{theorem}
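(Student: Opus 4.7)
The plan is to pin down $\mathcal{R}(G)=6$ by matching upper and lower bounds and then show that the structure of every length-$6$ $1$-$11$-representation forces the cube $vvv$. Throughout, let the triangle of $G$ have vertices $a,b,c$ and let $v$ be the isolated vertex. For the upper bound I would exhibit $w=vvvabc$ and verify directly that each $w_{\{v,x\}}=vvvx$ contains two $vv$ factors (so $v$ and $x$ are non-adjacent) while each $w_{\{x,y\}}$ with $x,y\in\{a,b,c\}$ has length $2$ and no repeated factor (so $x$ and $y$ are adjacent). For the lower bound $\mathcal{R}(G)\ge 6$ I would rule out length at most $5$ by the multiplicity pattern: it must be $(1,1,1,1)$ or $(2,1,1,1)$, and in either case at least one of the non-edges $\{v,x\}$ has a restricted word with fewer than two of the required $vv$ or $xx$ factors, forcing an unwanted edge. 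The key numerical fact I would use repeatedly is that a letter occurring $k$ times in a word contributes at most $k-1$ adjacent repetitions.

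At length $6$ the multiplicity pattern is either $(3,1,1,1)$ or $(2,2,1,1)$. I would eliminate $(2,2,1,1)$ by the same counting principle: this pattern always leaves at least one triangle vertex $x$ with multiplicity $1$, and whenever both $v$ and $x$ appear at most twice the restricted word $w_{\{v,x\}}$ has at most $(m_v-1)+(m_x-1)\le 1$ adjacent repetitions, less than the required $2$. Within the $(3,1,1,1)$ pattern the same argument forces the tripled letter to be $v$, since otherwise $v$ is a singleton and its restriction with any other singleton triangle vertex admits no repetitions at all.

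It remains to analyse the case where $v$ appears three times and $a,b,c$ each appear once, and this is the step that needs the most care. For each $x\in\{a,b,c\}$ the restricted word $w_{\{v,x\}}$ is obtained by inserting one copy of $x$ into $vvv$, and among the four possible positions only $xvvv$ and $vvvx$ produce the two $vv$ factors required by the non-edge $\{v,x\}$. Hence every triangle vertex lies either strictly before the first $v$ or strictly after the last $v$ in $w$. Since $w$ has exactly six positions, three occupied by $v$ and three by $a,b,c$, and no triangle vertex occupies an interior position between two $v$-positions, the three $v$-positions must be consecutive. Therefore $w$ contains the factor $vvv$, which is a cube, ruling out any cube-free length-$6$ representation. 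The main obstacle is this final positional step: one must be careful that ``no triangle vertex between two $v$'s'' genuinely forces consecutive $v$'s, and this uses the tight count of six positions in an essential way.
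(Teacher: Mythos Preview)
Your proof is correct and follows essentially the same strategy as the paper: exhibit an explicit length-$6$ representation for the upper bound, argue via multiplicities that shorter words fail, and then show that at length $6$ the letter $v$ must be tripled and its three occurrences must be consecutive. Your systematic use of the inequality $(m_v-1)+(m_x-1)\ge 2$ as a necessary condition for non-adjacency is a cleaner organizing device than the paper's more ad hoc case split, and your final positional argument (that no triangle vertex can lie strictly between the first and last $v$, forcing the three $v$'s to be consecutive) is spelled out more carefully than the paper's, which simply asserts that $w$ must lie in a small explicit set.
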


\begin{proof}
Let $\{1,2,3\}$ be the vertex set of the component $K_3$. Since $v$ is isolated,
it is non-adjacent to each of $1,2,$ and $3$, while the vertices $1,2,3$ are
pairwise adjacent. Consider the set $S=\{123vvv,\ vvv123,\ 1vvv23,$ $\ 12vvv3\}$.
In each word in $S$, the letters $1,2,$ and $3$ alternate pairwise, ensuring
their mutual adjacency, while the factor $vvv$ guarantees non-adjacency between
$v$ and each of $1,2,$ and $3$. Hence, every word in $S$ is a valid
$1$-$11$-representation of $G$. Therefore, $\mathcal{R}(G)\le 6.$
We know that if $x$ and $y$ are non-adjacent vertices in a graph, then any
$1$-$11$-representation must contain either two occurrences of $xx$ or two
occurrences of $yy$, or at least one occurrence of each. Since $1,2,$ and $3$
are all non-adjacent to $v$, this condition must hold for each pair
$\{v,x\}$, where $x\in\{1,2,3\}$.

If $v$ occurs only twice in $w$ (that is, $vv$ occurs but $vvv$ does not), then
$vv$ can satisfy this condition for at most one pair $\{v,x\}$. Consequently, the remaining pairs force the occurrences of $11$, $22$, and $33$.
Thus, the word $w$ must have length at least $8$, which contradicts $|w|<6$. Hence, $\mathcal{R}(G)=6$.

Finally, every word in $S$ contains the factor $vvv$, which is a cube. Moreover,
if $|w|=6$ and $w$ is a $1$-$11$-representation of $G$, then $v$ must occur
exactly three times, and the remaining letters $1,2,$ and $3$ must occur exactly
once and alternate, forcing $w$ to belong to the set $S$. Hence, no cube-free
$1$-$11$-representation of $G$ of length $6$ exists.
\end{proof}

According to Theorem \ref{tmcube}, cubes can be removed from any word-representation of the same graph without changing the represented graph. However, this is not the case for $1$-$11$-representations. For word-representable graphs, two distinct vertices $x$ and $y$ are non-adjacent if and only if there is at least one occurrence of $xx$ or $yy$, so removing one occurrence from a cube preserves non-adjacency. In contrast, $1$-$11$-representations require at least two such occurrences, and removing a cube may destroy this condition. This example shows that even when the representation number is small, cubes may not be avoidable in minimal $1$-$11$-representations. In the remainder of this section, we therefore focus on permutation representations and investigate when cubes can be systematically removed.

We first show that cubes in permutational $1$-$11$-representations are highly
restricted. In particular, the length of the repeated block must be a multiple of the permutation length.
\begin{lemma}\label{lm1}
Suppose that $w$ is a permutational $1$-$11$-representation of a graph $G(V,E)$,
where $|V|=n$. Then $w$ cannot contain a cube $XXX$, where $X\in V^{+}$ and the
length of $X$ is not a multiple of $n$.
\end{lemma}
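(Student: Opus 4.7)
The plan is to set up a counting argument on the occurrences of each letter inside the cube $XXX$, exploiting the fact that in a permutational $1$-$11$-representation every letter appears exactly once in each permutation. Writing $w = \pi_1 \pi_2 \cdots \pi_k$, let $t_v$ denote the number of occurrences of vertex $v$ in $X$, so that $v$ appears $3 t_v$ times in $XXX$ and $\sum_{v \in V} t_v = m$, where $m = |X|$. If I can force $t_v$ to equal a common integer $t$ for every vertex $v$, then summing gives $m = nt$, so $n \mid m$, contradicting the hypothesis that $|X|$ is not a multiple of $n$.

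To pin down a common value, I would analyse which permutations the factor $XXX$ touches. Suppose $XXX$ intersects $K$ consecutive permutations among the $\pi_j$'s, of which $B \in \{0,1,2\}$ are only partially contained in $XXX$ (at most one at each end); the remaining $K - B$ permutations lie entirely inside the factor. Each fully-contained permutation contributes exactly one occurrence of $v$ to $XXX$, while each partially-contained boundary permutation contributes $0$ or $1$. This yields the sandwich $K - B \leq 3 t_v \leq K$, so every value $3 t_v$ lies in an integer interval of length at most $2$.

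The decisive step is now purely arithmetic: any integer interval of length at most $2$ contains at most one multiple of $3$. Since $3 t_v$ is itself a multiple of $3$, all the values $3 t_v$ must coincide, hence so do all the $t_v$. Writing $t$ for this common value gives $m = \sum_{v \in V} t_v = nt$, as desired.

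The main subtlety I anticipate is handling the small cases cleanly: when $K \le 2$ the factor sits inside one or two only-partially-covered permutations and the sandwich degenerates to $3 t_v \le 2$, forcing $t_v = 0$ and hence $m = 0$, which already contradicts the existence of the cube; when $B = 0$ the interval degenerates to the single integer $\{K\}$, so $K$ must itself be a multiple of $3$, and the equality $m = nK/3$ again delivers $n \mid m$. Beyond verifying these corner situations, the argument is a short counting deduction and I do not foresee any deeper obstacle.
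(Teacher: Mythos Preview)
Your argument is correct and is genuinely different from the paper's proof. The paper splits into two cases. When $|X|<n$ it picks $x$ occurring in $X$ and $y$ not occurring in $X$, notes that $(XXX)_{\{x,y\}}=xxx$, and argues that the three copies of $x$ lie in three distinct permutations, the middle one of which would then have to omit $y$. When $|X|=in+l$ with $i\ge 1$ and $1\le l<n$, the paper writes out the boundary pieces $P_{i2},P_{j1},P_{j2},P_{l1},P_{l2},P_{k1}$ of the cube explicitly, locates vertices $x,y$ with $|X|_x=i$ but $|X|_y\in\{i-1,i-2\}$, and traces where the missing copies of $y$ would have to sit, obtaining a contradiction from the permutational structure.

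Your route sidesteps all of this case analysis with a single arithmetic observation: the count $3t_v$ of occurrences of $v$ in $XXX$ is a multiple of $3$ that is squeezed into $[K-B,K]$, an interval of at most three consecutive integers, so there is a unique admissible value and all $t_v$ coincide. This is shorter and more conceptual, and it explains transparently why a \emph{cube} is exactly what is needed (a square would give an interval of length $2$ but require a multiple of $2$, which does not force uniqueness). The paper's approach, by contrast, is more hands-on and yields the explicit boundary decomposition that it then reuses in the proof of Theorem~\ref{tm2}; your proof of the lemma is cleaner but does not produce that decomposition as a by-product. Your anticipated corner cases ($K\le 2$ and $B=0$) are indeed already absorbed by the main argument, so no separate treatment is required.
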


\begin{proof}
Suppose, for contradiction, that the word $w$ contains a factor $XXX$ such that
\[
|X| = i\times n + l,
\]
where $i\in \mathbb{Z}^{+}\cup\{0\}$ and $1\le l<n$. We consider two cases.

\medskip
\noindent\textbf{Case 1: $|X|<n$.}

Since $|X|<n$, not all vertices of $V$ can occur in $X$. As $X\in V^{+}$, there
exist distinct vertices $x,y\in V$ such that $x$ occurs in $X$ but $y$ does not.
Hence, the restricted word $(XXX)_{\{x,y\}}=xxx$. Thus, we obtain $w_{\{x,y\}}=s_1\,xxx\,s_2$, where $s_1,s_2\in\{x,y\}^{*}$. Since $w$ is a permutation representation, the three occurrences of $x$ belong to three distinct permutations $P_1$, $P_2$ and $P_3$ in $w$. Each permutation must contain $y$. The vertex $y$ may occur in $s_1$ and $s_2$, accounting for its presence in $P_1$ and $P_3$, but it cannot
occur in the permutation $P_2$, yielding a contradiction. Therefore, $|X|\ge n$.

\medskip
\noindent\textbf{Case 2: $|X| = i\times n + l$, where $i\in\mathbb{Z}^{+}$ and $1\le l<n$.}

Since $w$ is a permutation representation, the word $w$ is a concatenation of
permutations of $V$, each of length $n$. Consider the occurrence of the factor
$XXX$ in $w$. Because $|X|$ is not a multiple of $n$, each copy of $X$ in $w$
contains exactly $i$ complete permutations and a proper part of one additional
permutation. Suppose
\[
\begin{aligned}
w ={}& P_1P_2\cdots P_{i1}\,
P_{i2}P_{i+1}\cdots P_{j-1}P_{j1}\,
P_{j2}P_{j+1}\cdots P_{l-1}P_{l1} \\
& P_{l2}P_{l+1}\cdots P_{k-1}P_{k1}\,
P_{k2}\cdots P_p,
\end{aligned}
\]
where each $P_r$ and each concatenation $P_{r1}P_{r2}$ is a permutation of $V$.
Moreover,
\[
X =
P_{i2}P_{i+1}\cdots P_{j-1}P_{j1}
=
P_{j2}P_{j+1}\cdots P_{l-1}P_{l1}
=
P_{l2}P_{l+1}\cdots P_{k-1}P_{k1}.
\]

Since $l<n$, we have
$
1 \le |P_{i2}P_{j1}| = |P_{j2}P_{l1}| = |P_{l2}P_{k1}| < n.
$
Hence, there exist distinct vertices $x,y\in V$ such that $x$ occurs exactly $i$
times in $X$, while $y$ occurs $i-1$ or $i-2$ times in $X$, as each vertex must
occur once in every permutation of $w$. Consequently,
\[
|(XXX)_{\{x\}}| = 3\times i
\quad\text{and}\quad
3\times (i-2)\le |(XXX)_{\{y\}}|\le 3\times (i-1).
\]

Therefore, in the word $w$ there exist three permutations $P_a$, $P_b$, and
$P_c$ whose parts occurring in $XXX$ do not contain $y$. If two of these parts
belonged to the same copy of $X$, then that copy of $X$ would contain fewer
occurrences of $y$ than the other two, which is impossible. Hence, these parts
belong to three different copies of $X$.

Without loss of generality, assume that $P_a$ and $P_c$ contribute to the first
and third copies of $X$ in $XXX$. For these permutations, the unique occurrence
of $y$ lies outside $XXX$. The remaining permutation $P_b$ is either of the form
$P_{j1}P_{j2}$ or $P_{l1}P_{l2}$.

If $P_b=P_{j1}P_{j2}$, then $P_{j2}$ is the part of $P_b$ occurring in the second
copy of $X$ and does not contain $y$, so $y$ must occur in $P_{j1}$. Since $x$
occurs in $P_{i2}$, the first copy of $X$ contains the same number of occurrences
of $x$ and $y$, a contradiction. A similar contradiction arises when
$P_b=P_{l1}P_{l2}$, in which case $y$ must occur in $P_{l2}$, forcing equality of
the numbers of occurrences of $x$ and $y$ in the third copy of $X$.

Thus, $X$ cannot have length $i\times n+l$ with $1\le l<n$.
\end{proof}

Lemma \ref{lm1} shows that cubes whose length does not respect the permutation
block size cannot occur. This restriction plays a crucial role in identifying
which cubes are structurally removable.

Next, we show that consecutive identical permutation blocks can be removed
without changing the represented graph.
\begin{lemma}\label{lm2}
Suppose that $w$ is a permutational $1$-$11$-representation of a graph $G(V,E)$.
If $w$ contains a square $XX$ with $X=P_i$, then one occurrence of $P_i$ can be
removed without affecting the representation of $G$.
\end{lemma}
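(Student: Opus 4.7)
The plan is to write $w = A P_i P_i B$, where $A$ and $B$ are (possibly empty) concatenations of permutations of $V$, and to define $w' := A P_i B$. Since $P_i$ itself is a permutation of $V$, every letter of $V$ still appears in $w'$. Hence it suffices to show that for every pair of distinct vertices $x, y \in V$, the restricted word $w'_{\{x,y\}}$ has exactly the same number of occurrences of $xx$ and exactly the same number of occurrences of $yy$ as $w_{\{x,y\}}$, so that the $1$-$11$-condition is preserved pair by pair.

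The key observation is that since $P_i$ is a permutation of $V$, it contains both $x$ and $y$ exactly once. Therefore $(P_i)_{\{x,y\}} \in \{xy,\, yx\}$, and correspondingly $(P_iP_i)_{\{x,y\}} \in \{xyxy,\, yxyx\}$. Crucially, in both cases the block $(P_iP_i)_{\{x,y\}}$ has the same first letter and the same last letter as $(P_i)_{\{x,y\}}$, and neither $xy$, $yx$, $xyxy$, nor $yxyx$ contains $xx$ or $yy$ as an internal factor. Writing $\alpha := A_{\{x,y\}}$, $\gamma := (P_i)_{\{x,y\}}$, and $\beta := B_{\{x,y\}}$, we have
\[
w_{\{x,y\}} = \alpha\,\gamma\gamma\,\beta
\qquad\text{and}\qquad
w'_{\{x,y\}} = \alpha\,\gamma\,\beta .
\]
No $xx$ or $yy$ lies strictly inside $\gamma\gamma$ or strictly inside $\gamma$; and because the first letter of $\gamma\gamma$ equals the first letter of $\gamma$ (and likewise for the last letters), any $xx$ or $yy$ straddling the boundary between $\alpha$ and the middle block, or between the middle block and $\beta$, is present in $w_{\{x,y\}}$ if and only if it is present in $w'_{\{x,y\}}$.

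Consequently, $w_{\{x,y\}}$ contains at most one occurrence of $xx$ or $yy$ if and only if $w'_{\{x,y\}}$ does. Since $w$ is a $1$-$11$-representation of $G$, this means $x$ and $y$ are adjacent in $G$ precisely when the corresponding condition holds in $w'_{\{x,y\}}$, and similarly for non-adjacency. As this holds for every pair $\{x,y\}$, the word $w'$ is a permutational $1$-$11$-representation of $G$, so the duplicated block $P_i$ can indeed be removed. No real obstacle is expected; the only subtlety is making the boundary bookkeeping precise, which is handled by the observation that $\gamma$ and $\gamma\gamma$ share their first and last letters and contain no $xx$ or $yy$ internally.
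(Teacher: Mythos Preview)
Your proof is correct and follows essentially the same approach as the paper: both restrict to a pair $\{x,y\}$, use the key observation that $(P_i)_{\{x,y\}}\in\{xy,yx\}$, and argue that deleting one copy of $P_i$ does not disturb the $xx$/$yy$ pattern. Your presentation is slightly more streamlined---you show directly that the counts of $xx$ and $yy$ are preserved exactly (via the first/last-letter and no-internal-repeat observation), whereas the paper splits into separate adjacency and non-adjacency cases---but the underlying idea is the same.
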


\begin{proof}
Suppose
\[
w = P_1P_2\cdots P_iP_i\cdots P_k,
\]
where each $P_j$ is a permutation of $V$. Let $w'$ be the word obtained from $w$
by deleting one occurrence of $P_i$. We show that $w'$ represents the same graph
$G$.

\medskip
\noindent\textbf{Case 1: Preservation of adjacency.}

Let $x,y\in V$ be adjacent in $G$. Since $w$ is a permutational $1$-$11$-representation,
the restricted word $w_{\{x,y\}}$ alternates, that is,
\[
w_{\{x,y\}} \in \{(xy)^t,\ (yx)^t,\ (xy)^{t_1}(yx)^{t_2},\ (yx)^{t_1}(xy)^{t_2}\}
\]
for some $t,t_1,t_2\ge1$.
In particular, the restriction of $P_i$ to $\{x,y\}$ is either $xy$ or $yx$.
Removing one copy of $P_i$ deletes exactly one occurrence of $xy$ or $yx$ from
$w_{\{x,y\}}$, and hence the remaining word $w'_{\{x,y\}}$ still alternates.
Therefore, $x$ and $y$ remain adjacent in $w'$.

\medskip
\noindent\textbf{Case 2: Preservation of non-adjacency.}

Let $x,y\in V$ be non-adjacent in $G$. Then $w_{\{x,y\}}$ contains at least one
occurrence of each $xx$ and $yy$, and can be written in one of the following
forms:
\[
(xy)^{k_1}(yx)^{k_2}(xy)s
\quad\text{or}\quad
(yx)^{k_1}(xy)^{k_2}(yx)s,
\]
where $k_1,k_2\ge1$ and $s\in\{x,y\}^*$.
Again, the restriction of $P_i$ to $\{x,y\}$ is either $xy$ or $yx$. Therefore, we obtain $(P_iP_i)_{\{x,y\}}=(xy)^2$ or $(yx)^2$. Deleting one copy of $P_i$ reduces one of the exponents $k_1$ or $k_2$ by $1$, but does not eliminate the occurrence of $xx$ or $yy$ in the restricted word. Hence, $w'_{\{x,y\}}$ still contains a square, and $x$ and $y$ remain non-adjacent in $w'$.

\medskip
In both cases, adjacency and non-adjacency are preserved. Therefore, removing
one occurrence of $P_i$ does not change the graph represented by $w$.
\end{proof}

Lemma \ref{lm2} shows that consecutive identical permutations can be removed
without changing the represented graph. This result is used to remove larger
repetitions such as cubes.

\begin{cor}\label{cor1}
      Suppose $w$ is a permutational $1$-$11$-representation of the graph $G(V,E)$. If $w$ contains a cube $XXX$, where $X=P_i$, then we can remove two $P_i$.
\end{cor}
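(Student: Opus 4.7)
The plan is to derive Corollary \ref{cor1} as a direct iterated application of Lemma \ref{lm2}. Since the cube $XXX$ with $X = P_i$ is exactly the factor $P_iP_iP_i$, it contains the square $P_iP_i$. By Lemma \ref{lm2}, deleting one of these two consecutive copies of $P_i$ produces a new word $w'$ that still permutationally $1$-$11$-represents $G$.

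After this first deletion, the factor $P_iP_iP_i$ in $w$ has been replaced by $P_iP_i$ in $w'$, which is again a square of the form $XX$ with $X = P_i$. Hence Lemma \ref{lm2} applies a second time: deleting one copy of $P_i$ from $w'$ yields a word $w''$ that still represents $G$, leaving only a single occurrence of $P_i$ where the original cube stood. Composing the two reductions shows that two copies of $P_i$ can be removed from the cube without altering the represented graph.

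There is essentially no obstacle: the argument is a straightforward two-step application of the previous lemma, and the only point worth noting is that Lemma \ref{lm2} is genuinely applicable the second time, which holds because the result of the first deletion remains a permutational $1$-$11$-representation and still contains $P_iP_i$ as a factor. A brief remark can be added that this reduces any cube $P_iP_iP_i$ to a single $P_i$, which will be used in the next results on minimum-length permutational representations.
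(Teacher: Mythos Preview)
Your proof is correct and matches the paper's approach: the paper simply states ``Directly derived from Lemma~\ref{lm2},'' and your two-step application of that lemma is exactly the intended derivation, just spelled out in more detail.
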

\begin{proof}
    Directly derived from Lemma \ref{lm2}.
\end{proof}
We now combine the previous results to show that cubes in permutation
$1$-$11$-representations are not essential. In fact, any such cube can be
eliminated while preserving the represented graph.

\begin{theorem}\label{tm2}
If $w$ is a permutational $1$-$11$-representation of a graph $G(V,E)$ and $w$
contains a cube $XXX$, where $X\in V^{+}$, then there exists a cube-free
permutational $1$-$11$-representation of $G$.
\end{theorem}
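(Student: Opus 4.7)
The plan is to show that any cube $XXX$ in a permutational $1$-$11$-representation $w$ can be shortened by deleting one copy of $X$, yielding a shorter permutational $1$-$11$-representation of the same graph $G$. Iterating then terminates at a cube-free permutational $1$-$11$-representation. This extends Corollary \ref{cor1} (which handles only $|X|=n$) to cubes of any length $|X|=kn$, the only possibility allowed by Lemma \ref{lm1}.

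By Lemma \ref{lm1}, $|X|=kn$ for some $k\ge 1$. A divisibility-by-$3$ count on letter occurrences in $XXX$ (each full permutation inside $XXX$ contributes exactly $1$ to every letter, so the partial permutations at the two ends of $XXX$ can together contribute at most $1$ extra) shows that every letter of $V$ appears exactly $k$ times in $X$. Write $w=s_1 XXX s_2$ and set $w'=s_1 XX s_2$. The word $w'$ is still a concatenation of permutations of $V$: if $X$ is aligned with the permutation boundaries this is immediate, while otherwise the two partial permutations flanking the removed copy of $X$ are forced by the cube structure to agree with each other and combine into one full permutation. In particular $w'$ has $k$ fewer permutations than $w$.

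It remains to verify that $w'$ still represents $G$. For any pair $\{x,y\}\subseteq V$ set $Z=X_{\{x,y\}}$; by the count above, $|Z|=2k$ with $k$ copies of each letter. Then $w_{\{x,y\}}=A\,Z\,Z\,Z\,B$ and $w'_{\{x,y\}}=A\,Z\,Z\,B$, with the $A|Z$ and $Z|B$ junctions identical in both. Setting $\alpha=\#xx(Z)$, $\beta=\#yy(Z)$, and $\gamma,\delta\in\{0,1\}$ recording whether $Z$ starts and ends with $x$ or with $y$, a direct count gives
\[
\#xx(w_{\{x,y\}})-\#xx(w'_{\{x,y\}})=\alpha+\gamma, \qquad \#yy(w_{\{x,y\}})-\#yy(w'_{\{x,y\}})=\beta+\delta,
\]
and a short argument on the numbers of maximal $x$- and $y$-runs in $Z$ (using $|Z|_x=|Z|_y=k$) yields $\alpha+\gamma=\beta+\delta=:\Delta$, so both counts drop by the same $\Delta$. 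For an adjacent pair, $\#xx(w_{\{x,y\}})\le 1$ and $\#yy(w_{\{x,y\}})\le 1$ combined with $\#yy(w_{\{x,y\}})\ge 3\beta+2\delta$ force $\Delta=0$, so adjacency is preserved. For a non-adjacent pair, a case split on $(\gamma,\delta)$ together with the bounds $\#xx(w_{\{x,y\}})\ge 3\alpha+2\gamma$ and $\#yy(w_{\{x,y\}})\ge 3\beta+2\delta$ leaves at least one of the two counts $\ge 2$ after subtracting $\Delta$, preserving non-adjacency. Since each application removes $k\ge 1$ permutations, iterating terminates at a cube-free permutational $1$-$11$-representation of $G$. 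The main obstacle is this last case analysis: the identity $\alpha+\gamma=\beta+\delta$ makes the $xx$- and $yy$-losses symmetric, and the bound $\#xx(w_{\{x,y\}})\ge 3\alpha+2\gamma$ supplies exactly enough slack to keep non-adjacency alive in every sub-case.
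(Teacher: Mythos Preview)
Your overall strategy matches the paper's exactly: invoke Lemma~\ref{lm1} to get $|X|=kn$, delete one copy of $X$ from the cube, check that the result is still a permutational $1$-$11$-representation of $G$, and iterate on length. The difference is only in how you verify that adjacency and non-adjacency survive the deletion. The paper argues positionally: it observes that the permutation boundaries force $P_{j1}P_{i2}$ to be a permutation of $V$, so no $xx$ or $yy$ can sit at a $Z|Z$ junction, and then locates where the surviving $xx/yy$ factors must lie. Your argument is purely combinatorial: you count $\#xx$ and $\#yy$ directly and use the identity $\alpha+\gamma=\beta+\delta$ to show both counts drop by the same amount $\Delta$. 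This is a nice alternative and arguably more transparent than the paper's positional case analysis, which is somewhat terse on the non-adjacency side.

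Two minor imprecisions worth tightening. First, in your letter-count argument the parenthetical ``at most $1$ extra'' is not immediate: the two partial permutations at the ends of $XXX$ come from different $P_r$'s and could in principle each contain the letter, giving $3k+1$. The divisibility-by-$3$ argument you invoke is what actually rules out $3k\pm1$; state it that way. Second, in the non-adjacency case your claim ``at least one of the two counts $\ge 2$'' fails when $\Delta=0$ and the original counts are $1+1$; but then nothing changes and non-adjacency is trivially preserved, so just separate out $\Delta=0$ explicitly. Your termination argument (length strictly decreases) is cleaner than the paper's assertion that ``no new cube can be created,'' which is not obviously true and is in any case unnecessary.
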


\begin{proof}
Suppose that
\[
w = P_1P_2\cdots P_p,
\]
where each $P_i$ is a permutation of $V$, and assume that $w$ contains a cube
$XXX$ with $X\in V^{+}$.

If $X=P_i$ for some $i$, then by Lemma~\ref{lm2}, one may remove two occurrences
of $P_i$, and the resulting word still represents $G$. Hence, we may assume
that $X$ is not a single permutation. Thus, the factor $XXX$ occurs in $w$ in the following form:
\[
\begin{aligned}
w ={}& P_1P_2\cdots P_{i1}\,
P_{i2}P_{i+1}\cdots P_{j-1}P_{j1}\,
P_{j2}P_{j+1}\cdots P_{l-1}P_{l1} \\
& P_{l2}P_{l+1}\cdots P_{k-1}P_{k1}\,
P_{k2}\cdots P_p,
\end{aligned}
\]
where each $P_r$ and each concatenation $P_{r1}P_{r2}$ is a permutation of $V$,
and
\[
X =
P_{i2}P_{i+1}\cdots P_{j-1}P_{j1}
=
P_{j2}P_{j+1}\cdots P_{l-1}P_{l1}
=
P_{l2}P_{l+1}\cdots P_{k-1}P_{k1}.
\]

By Lemma~\ref{lm1}, the length of $X$ must be a multiple of $|V|=n$. Hence,
\[
|P_{i2}|+|P_{j1}|=|P_{j2}|+|P_{l1}|=|P_{l2}|+|P_{k1}|=n,
\]
and also
\[
|P_{i1}|+|P_{i2}|=|P_{j1}|+|P_{j2}|=|P_{l1}|+|P_{l2}|=|P_{k1}|+|P_{k2}|=n.
\]
Since each block $P_{ab}$, with $a\in\{i,j,l,k\}$ and $b\in\{1,2\}$, is a prefix or
suffix of $X$, it follows that $
P_{i2}=P_{j2}=P_{l2}
\quad\text{and}\quad
P_{j1}=P_{l1}=P_{k1}.
$
Moreover, the sequence of permutations between these boundary blocks must be the
same in all three copies of $X$. Consequently, the word $w$ can be rewritten as
\[
\begin{aligned}
w ={}& P_1P_2\cdots P_{i1}\,
X\,X\,X\,
P_{k2}\cdots P_p \\
={}& P_1P_2\cdots P_{i1}\,
\bigl(P_{i2}P_{i+1}\cdots P_{j-1}P_{j1}\bigr)^3\,
P_{k2}\cdots P_p .
\end{aligned}
\]

Let $w'$ be the word obtained from $w$ by removing the middle copy of $X$, that
is,
$
w' = P_1P_2\cdots P_{i1}\,
X\,X\,
$ $P_{k2}\cdots P_p .
$
We show that $w'$ is still a permutational $1$-$11$-representation of $G$.

\medskip
\noindent\textbf{Case 1: Preservation of adjacency.}

Let $x,y\in V$ be adjacent in $G$. Removing the middle copy of $X$ deletes the
same number of occurrences of $x$ and $y$. Since $P_{j1}P_{i2}$ is a permutation
of $V$, removing one occurrence of $P_{i2}$ and $P_{j1}$ does not create a factor
of the form $xx$ or $yy$. Hence, the alternating pattern of $x$ and $y$ in
$w_{\{x,y\}}$ is preserved in $w'_{\{x,y\}}$, and adjacency remains unchanged.

\medskip
\noindent\textbf{Case 2: Preservation of non-adjacency.}

Let $x,y\in V$ be non-adjacent in $G$. Then $w_{\{x,y\}}$ contains at least two
occurrences of either $xx$ or $yy$ or at least one occurrence of both. If at least one such occurrence lies entirely outside the cube $XXX$, then removing the middle copy of $X$ does not eliminate all such occurrences. If one occurrence lies within $X$, then another identical copy of $X$ remains in $w'$, preserving the non-adjacency. The only remaining possibility is that the two occurrences overlap the boundary between consecutive copies of $X$, which would require a repeated letter in $P_{j1}P_{i2}$. This is impossible since $P_{j1}P_{i2}$ is a permutation of $V$. Thus, non-adjacency is preserved in $w'$.

\medskip
Therefore, $w'$ represents the same graph $G$ as $w$.

Finally, this procedure can be applied iteratively to remove all cubes in $w$.
Since the middle copy of each cube is removed, no new cube can be created in the
process. Hence, we obtain a cube-free permutational $1$-$11$-representation of
$G$.
\end{proof}

Theorem \ref{tm2} shows that every permutational $1$-$11$-representation can be made
cube-free. Together with the earlier example, this shows that cube-free representations may not exist at minimum length, but they always exist for permutation representations.

The next result relates cube-freeness to the permutational $1$-$11$
representation number of a graph.

\begin{cor}
    Let $G(V,E)$ be a graph with $\mathcal{R}_\pi(G)=k$. Then every word $w$ consisting of $k$ permutations that represents $G$ is cube-free.
\end{cor}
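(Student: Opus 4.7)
The plan is to prove this by contradiction, leveraging Theorem~\ref{tm2} and Lemma~\ref{lm1} together. Suppose, toward a contradiction, that $w$ is a permutational $1$-$11$-representation of $G$ consisting of exactly $k=\mathcal{R}_\pi(G)$ permutations and that $w$ contains a cube $XXX$ with $X\in V^{+}$. The goal is to exhibit a permutational $1$-$11$-representation of $G$ using strictly fewer than $k$ permutations, which immediately contradicts the definition of $\mathcal{R}_\pi(G)$.

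First I would invoke Lemma~\ref{lm1}: since $w$ is a permutational $1$-$11$-representation and contains a cube $XXX$, the length $|X|$ must be a multiple of $n=|V|$. Write $|X|=mn$ for some integer $m\ge 1$. Next I would split the argument into the same two subcases used in Theorem~\ref{tm2}. If $X$ coincides with a single permutation block $P_i$ of $w$, then Corollary~\ref{cor1} lets me delete two copies of $P_i$, producing a permutational $1$-$11$-representation $w'$ of $G$ with $k-2$ permutations. Otherwise, the proof of Theorem~\ref{tm2} shows that the middle copy of $X$ can be removed from $w$ while still representing $G$; the resulting word $w'$ is a concatenation of exactly $k-m$ permutations of $V$, because removing $|X|=mn$ letters that form $m$ consecutive permutation blocks preserves the permutational structure on both sides of the deletion.

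In either subcase, the word $w'$ is a permutational $1$-$11$-representation of $G$ using at most $k-1$ permutations, contradicting $\mathcal{R}_\pi(G)=k$. Therefore no such cube $XXX$ can exist in $w$, which means $w$ is cube-free.

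The only subtlety I expect is to justify cleanly that the modified word $w'$ really is still a concatenation of permutations of $V$ (rather than just a shorter word): this is where Lemma~\ref{lm1} does the real work, since it forces the boundaries of $X$, $X$, $X$ to align with complete permutations after the deletion. Once that observation is in place, the proof is an essentially one-line application of Theorem~\ref{tm2} combined with the minimality of $k$.
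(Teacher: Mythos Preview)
Your proposal is correct and follows essentially the same contradiction argument as the paper: assume a cube in a minimal permutational representation, remove part of it via Theorem~\ref{tm2}, and contradict minimality of $k$. You are somewhat more explicit than the paper in invoking Lemma~\ref{lm1} and the case split to justify that the shortened word remains a concatenation of permutations, but this is the same mechanism underlying the paper's one-line appeal to Theorem~\ref{tm2}.
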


\begin{proof}
    Suppose, $w=P_1P_2\cdots P_k$ is a word of $k$ permutations that $1$-$11$-represents the graph $G$ and that $w$ contains a cube. By Theorem~\ref{tm2}, this cube can be removed to obtain a shorter word $w'=P_1P_2\cdots P_\ell$ with $\ell<k$ that still $1$-$11$-represents $G$. This contradicts the assumption that $\mathcal{R}_\pi(G)=k$. Therefore, every word of $k$ permutations that represents $G$ must be cube-free.
\end{proof}
Thus, any permutational $1$-$11$-representation with the permutational
$1$-$11$-representation number is necessarily cube-free.

We now show that cube-free results for permutational $1$-$11$-representations do
not hold for square-free representations.

\begin{theorem}
Suppose that $G$ is a disconnected graph consisting of a component $K_3$ and an
isolated vertex $v$. Then $\mathcal{R}_\pi(G)=3$, and there does not exist any
square-free word consisting of two permutations that is a $1$-$11$-representation
of $G$.
\end{theorem}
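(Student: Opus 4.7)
The plan is to establish $\mathcal{R}_\pi(G)=3$ first and then prove the stronger structural fact that every $3$-permutation $1$-$11$-representation of $G$ must contain a square. The stated non-existence of a square-free $2$-permutation representation then follows as an immediate corollary of $\mathcal{R}_\pi(G)=3$, while the real content is the square-forcing at the minimum number of permutations.

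For the lower bound $\mathcal{R}_\pi(G)\ge 3$, I would fix $i\in\{1,2,3\}$ and suppose $w=P_1P_2$ were a permutational $1$-$11$-representation. Each $P_k$ contributes exactly one $v$ and one $i$, so $w_{\{v,i\}}$ is a concatenation of two length-$2$ blocks, each being $vi$ or $iv$, giving only the four words $vivi$, $viiv$, $ivvi$, $iviv$. None of these contains two copies of $vv$, two copies of $ii$, or one of each; the length-$4$ candidates $vvii$ and $iivv$ are impossible because they would require both $v$'s to appear in $P_1$ or both $i$'s to appear in $P_2$, violating the permutation constraint. Hence $v$ would be adjacent to $i$, contradicting the isolation of $v$. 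The matching upper bound is witnessed by $w=(123v)(v123)(123v)$: a direct check shows that $w_{\{v,i\}}=ivviiv$ makes $v$ non-adjacent to each $i\in\{1,2,3\}$, while $w_{\{i,j\}}=ijijij$ is alternating for every edge of the $K_3$ component.

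The heart of the proof is to show that every $3$-permutation representation $w=P_1P_2P_3$ of $G$ contains a square. Setting $\beta_k(i)=1$ if $v$ precedes $i$ in $P_k$ and $\beta_k(i)=0$ otherwise, $w_{\{v,i\}}$ is a length-$6$ concatenation of three $vi$-or-$iv$ blocks, and an enumeration of the eight possibilities for $\beta(i)=(\beta_1(i),\beta_2(i),\beta_3(i))$ shows that the non-adjacency condition is realized precisely when $\beta(i)\in\{(1,0,1),(0,1,0)\}$; the remaining six patterns yield at most one occurrence of $vv$ or $ii$ in $w_{\{v,i\}}$. Let $S=\{i:\beta(i)=(1,0,1)\}$ and $T=\{i:\beta(i)=(0,1,0)\}$, so $S\cup T=\{1,2,3\}$. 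If both $S$ and $T$ were nonempty, then for $i\in S$ and $j\in T$ the orders $j\prec v\prec i$ in $P_1,P_3$ and $i\prec v\prec j$ in $P_2$ force the pair $\{i,j\}$ to realize the pattern $(0,1,0)$, which by the same enumeration makes $i$ and $j$ non-adjacent in $w$, contradicting the edge $ij$ of $K_3$.

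Hence either $S=\{1,2,3\}$ or $T=\{1,2,3\}$. In the first case $v$ is forced to be the first letter of $P_1$ and $P_3$ and the last letter of $P_2$, so $w=v\pi_1\pi_2vv\pi_3$ and the last letter of $P_2$ together with the first letter of $P_3$ produce the square $vv$; the second case is symmetric and yields $w=\pi_1vv\pi_2\pi_3v$. Either way $w$ contains a square, completing the proof. The main obstacle is the combinatorial bookkeeping of the eight order patterns, and in particular the transfer of the $\{v,i\}$ non-adjacency patterns to the forbidden $\{i,j\}$ pattern when $S$ and $T$ are both nonempty; once this enumeration is handled, the positional forcing of $v$ to a permutation boundary is automatic.
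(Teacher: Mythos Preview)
Your proof is correct and follows essentially the same argument as the paper. Both proofs establish $\mathcal{R}_\pi(G)=3$ by the same impossibility count for two permutations, then characterize the two admissible order patterns for $\{v,i\}$ across the three permutations (your $(1,0,1)$ and $(0,1,0)$ are exactly the paper's $vxxvvx$ and $xvvxxv$), show that mixing the two patterns forces a non-edge inside the $K_3$, and conclude that a uniform pattern pins $v$ to a permutation boundary and creates the factor $vv$. Your $\beta$-encoding makes the enumeration cleaner, and you correctly identified that the literal ``two permutations'' clause is vacuous once $\mathcal{R}_\pi(G)=3$ is known, while the substantive claim (proved in the paper) is about three permutations.
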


\begin{proof}
Since only complete graphs admit a permutational $1$-$11$-representation with
one permutation, we have $\mathcal{R}_\pi(G)>1$. Let $\{1,2,3\}$ be the vertex
set of the component $K_3$. The word
$
w = 123\,v\,123\,v\,123
$
is a permutational $1$-$11$-representation of $G$. Hence, $
\mathcal{R}_\pi(G)\le 3.$

We now show that $\mathcal{R}_\pi(G)\neq 2$. Suppose, for contradiction, that
$w'=P_1P_2$ is a permutational $1$-$11$-representation of $G$. Recall that if
$x$ and $y$ are non-adjacent vertices, then any $1$-$11$-representation must
contain either two occurrences of $xx$, or two occurrences of $yy$, or at least
one occurrence of each. Since $v$ is non-adjacent to each of $1,2,$ and $3$, this
condition must hold for every pair $\{v,x\}$ with $x\in\{1,2,3\}$.

However, $w'$ cannot contain two occurrences of $vv$ or $xx$, since each
permutation contains each vertex exactly once. Thus, for each $x\in\{1,2,3\}$,
the restricted word $w'_{\{v,x\}}$ must contain exactly one occurrence of $vv$
and one occurrence of $xx$. Consequently, $
w'_{\{v,1\}} \in \{vv11,\,11vv\}.
$
In either case, one of the permutations $P_1$ or $P_2$ would have to contain two
occurrences of $v$ or two occurrences of $1$, which is impossible. Hence, $w'$
cannot represent $G$, and therefore $\mathcal{R}_\pi(G)=3$.

Next, suppose that there exists a square-free word
$
w = P_1P_2P_3,
$
where each $P_i$ is a permutation of $V$, that $1$-$11$-represents $G$. Since
each vertex appears exactly once in each permutation, every vertex appears
exactly three times in $w$. As $w$ is square-free, it cannot contain a factor of
the form $vvv$ or $111$, $222$, or $333$.

For each $x\in\{1,2,3\}$, the restricted word $w_{\{v,x\}}$ must therefore be of
the form $xvvxxv$ or $vxxvvx$. If $w$ begins with $v$, then
$
w_{\{v,1\}} = v11vv1,\quad
w_{\{v,2\}} = v22vv2,\quad
w_{\{v,3\}} = v33vv3.
$
In this case, none of the vertices $1,2,$ or $3$ can occur between the second and
third occurrences of $v$, which produces a square $vv$ in $w$. This
contradicts the assumption that $w$ is square-free. Thus, $w$ must begin with one of $1,2,$ or $3$. Suppose
$
w_{\{v,1\}} = 1vv11v,\quad
w_{\{v,2\}} = 2vv22v,\quad
w_{\{v,3\}} = 3vv33v.
$
Then the first occurrences of $1,2,$ and $3$ occur before the first occurrence of
$v$, and their second occurrences occur after the second occurrence of $v$,
which again forces a square $vv$ in $w$.

To avoid this, at least one vertex from $\{1,2,3\}$ must occur twice between the
first and second occurrences of $v$, and another must occur twice between the
second and third occurrences of $v$. Without loss of generality, suppose $2$
occurs twice between the first and second occurrences of $v$, and $3$ occurs
twice between the second and third occurrences of $v$. Since $2$ and $3$ are adjacent, the restricted word $w_{\{2,3\}}$ may contain at most one of the factors $22$ or $33$, but not both. However, from $
w_{\{v,2\}} = v22vv2 \quad \text{and} \quad w_{\{v,3\}} = 3vv33v,
$
neither $2$ nor $3$ can occur between $33$ or $22$, respectively. This implies that $w_{\{2,3\}}$ contains both $22$ and $33$, which contradicts the adjacency of $2$ and $3$.

Hence, any permutational $1$-$11$-representation of $G$ with three permutations must contain a square. Therefore, $G$ does not admit a square-free permutation $1$-$11$-representation.
\end{proof}
Together with the previous corollary, this shows that while cubes can always be eliminated at the permutational $1$-$11$-representation number, squares may be unavoidable. Since these conditions depend only on fixed patterns in the word, we now study the set of all such representations from a formal language perspective.

\section{The language of $1$-$11$-representations}

In this section, we show that the language of all $1$-$11$-representations of a given graph $G$ is regular by describing a regular language that captures such representations. We begin by recalling the notion of a deterministic finite automaton.

A \emph{deterministic finite automaton} (DFA) is a $5$-tuple $\mathcal{A}=(Q,V,\delta,q_0,F)$, where $Q$ is a finite set of states, $V$ is a finite input alphabet, $\delta:Q\times V\to Q$ is the transition function, $q_0\in Q$ is the initial state, and $F\subseteq Q$ is the set of accepting states.

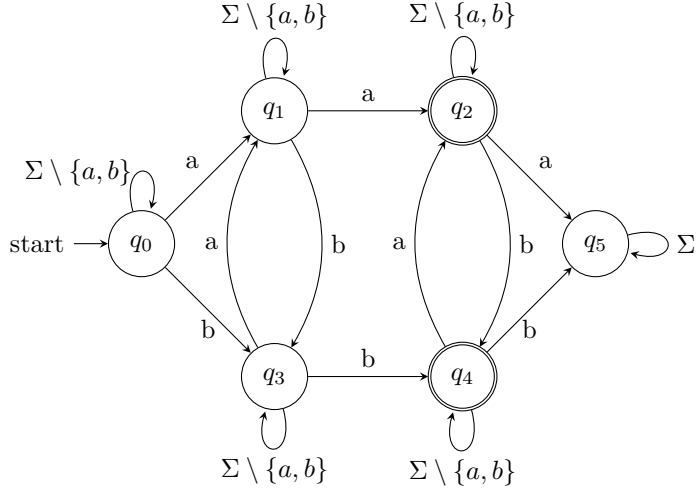
\begin{figure}[h]
    \centering
\begin{tikzpicture}[->, >=stealth, node distance=2.5cm, on grid, auto]

\node[state, initial] (q0) {$q_0$};
\node[state, above right=of q0] (q1) {$q_1$};
\node[state, accepting, right=of q1] (q2) {$q_2$};
\node[state, below right=of q0] (q3) {$q_3$};
\node[state, accepting, right=of q3] (q4) {$q_4$};
\node[state, below right=of q2] (q5) {$q_5$};

\path
(q0) edge node {a} (q1)
     edge node [below] {b} (q3)
     edge[loop above] node [left] {$\Sigma\setminus \{a,b\}$} ()
(q1) edge node {a} (q2)
     edge[bend left] node {b} (q3)
     edge[loop above] node {$\Sigma\setminus \{a,b\}$} ()
(q2) edge node {a} (q5)
     edge[bend left] node {b} (q4)
     edge[loop above] node {$\Sigma\setminus \{a,b\}$} ()
(q3) edge[bend left] node {a} (q1)
     edge node {b} (q4)
     edge[loop below] node {$\Sigma\setminus \{a,b\}$} ()
(q4) edge[bend left] node {a} (q2)
     edge node[below] {b} (q5)
     edge[loop below] node {$\Sigma\setminus \{a,b\}$} ()
(q5) edge[loop right] node {$\Sigma$} ();

\end{tikzpicture}
\caption{A DFA recognizing the language $L_{a,b}$.}
\label{fig:dfa}
\end{figure}
The DFA in Figure~\ref{fig:dfa} recognises the language
\[
L_{a,b}
=
\left\{
w \in \Sigma^*
\;\middle|\;
\begin{array}{l}
a,b \in \Sigma,
w_{\{a,b\}} \text{ contains at least one occurrence of each of } \\ a \text{ and } b,
w_{\{a,b\}} \text{ contains at most one occurrence of } aa \text{ or } bb,\\
\text{symbols from } \Sigma \setminus \{a,b\} \text{ may appear anywhere in } w
\end{array}
\right\}.
\]

This language captures the adjacency condition in a $1$-$11$-representation of a graph. Two vertices $a$ and $b$ are adjacent if and only if $w_{\{a,b\}}$ contains at most one occurrence of $aa$ or $bb$; they are non-adjacent otherwise. Since regular languages are closed under complement, the language $\overline{L}_{a,b}$ capturing non-adjacency is also regular.
\newpage
\begin{theorem}\label{thm1}
The language of all $1$-$11$-representations of a given graph $G(V,E)$ is
regular.
\end{theorem}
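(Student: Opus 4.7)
The plan is to express $L_G$, the set of all $1$-$11$-representations of $G$, as a finite Boolean combination of regular languages, and then invoke closure of the class of regular languages under intersection and complementation. The definition of a $1$-$11$-representation is inherently pairwise: it imposes one constraint per unordered pair of vertices, and each such constraint depends only on the letters of that pair, which is precisely the type of condition captured by the language $L_{a,b}$ already introduced.

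First I would observe that for each $v \in V$ the language $L_v = V^* v V^*$ of words in which $v$ occurs at least once is regular, recognised by a two-state DFA. Next, for every unordered pair $\{a,b\}$ of distinct vertices of $V$, the language $L_{a,b}$ of Figure~\ref{fig:dfa} is regular, and by closure under complementation so is $\overline{L_{a,b}}$. The former captures the adjacency condition imposed on $\{a,b\}$ when both vertices appear, while the latter captures the non-adjacency condition (at least two factors of the form $aa$ or $bb$ in $w_{\{a,b\}}$) when both vertices appear.

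I would then write
\[
L_G \;=\; \Bigl(\bigcap_{v \in V} L_v\Bigr) \cap \Bigl(\bigcap_{\{a,b\} \in E} L_{a,b}\Bigr) \cap \Bigl(\bigcap_{\substack{\{a,b\} \subseteq V,\, a \neq b \\ \{a,b\} \notin E}} \overline{L_{a,b}}\Bigr),
\]
and verify both inclusions directly from the definitions. A word $w$ lies in this intersection exactly when every vertex of $V$ occurs at least once in $w$, every edge pair $\{a,b\}$ satisfies the "at most one $aa$ or $bb$" condition, and every non-edge pair $\{a,b\}$ (whose two vertices both appear in $w$, by the first conjunct) fails that condition. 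This is verbatim the definition of a $1$-$11$-representation of $G$.

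Since $V$ and $E$ are finite, the displayed expression is a finite intersection of regular languages, hence regular. The only point that requires a moment of care is the role of the $L_v$ conjuncts: without them, each $\overline{L_{a,b}}$ would also contribute words in which $a$ or $b$ is absent, so the verification of the non-adjacency condition would be incorrect. Apart from this bookkeeping, no combinatorial obstacle arises and no automaton beyond the one in Figure~\ref{fig:dfa} needs to be constructed; the argument is a direct application of the standard closure properties of the regular languages.
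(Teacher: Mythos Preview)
Your proposal is correct and follows essentially the same approach as the paper: both express $L_G$ as the finite intersection of the languages $L_{a,b}$ over edges and $\overline{L_{a,b}}$ over non-edges, then invoke closure of regular languages under complement and finite intersection. Your explicit inclusion of the conjuncts $\bigcap_{v\in V} L_v$ is a small addition the paper omits (it relies instead on $L_{a,b}$ already requiring both letters to occur), and it cleanly handles the corner case of vertices incident with no edge.
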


\begin{proof}
Let the alphabet be $V$. For any pair of distinct vertices $a,b\in V$, let $L_{a,b}$ denote the set of all words $w\in V^*$ such that the subword $w_{\{a,b\}}$ contains at least one occurrence of each of $a$ and $b$, and at most one occurrence of $aa$ or $bb$. Let $\overline{L}_{a,b}$ denote the complement of $L_{a,b}$ in $V^*$.

Let $L$ denote the language of all $1$-$11$-representations of the graph $G$. A word $w\in V^*$ belongs to $L$ if and only if $w$ belongs to $L_{a,b}$ for every pair of vertices $a$ and $b$ that are adjacent, and belongs to $\overline{L}_{a,b}$ for every pair of vertices $a$ and $b$ that are non-adjacent.

Equivalently, $L$ is the intersection of all languages $L_{a,b}$ taken over adjacent pairs of vertices $a$ and $b$, and all languages $\overline{L}_{a,b}$ taken over non-adjacent pairs of vertices $a$ and $b$. For each pair of vertices $a,b\in V$, the language $L_{a,b}$ is regular, as recognised by the DFA in Figure~\ref{fig:dfa}. Since regular languages are closed under complement and finite intersection, the language $L$ is regular.
\end{proof}

\begin{cor}
The language of all permutational $1$-$11$-representations of a given graph $G(V,E)$ is regular.
\end{cor}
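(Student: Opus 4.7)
The plan is to express the language $L_{\pi}$ of all permutational $1$-$11$-representations of $G$ as the intersection of two regular languages, and then invoke closure under intersection. Specifically, I would write $L_{\pi} = L \cap L_{\mathrm{perm}}$, where $L$ is the language of all $1$-$11$-representations of $G$ (regular by Theorem \ref{thm1}), and $L_{\mathrm{perm}} \subseteq V^{*}$ is the language of all words that are concatenations of permutations of $V$. By the definition of a permutational $1$-$11$-representation, membership in $L_{\pi}$ is exactly the conjunction of these two properties, so the equality is immediate from the definitions.

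Next, I would argue that $L_{\mathrm{perm}}$ is regular. Let $\mathcal{P}$ denote the set of all words over $V$ of length $n=|V|$ that are permutations of $V$. Then $|\mathcal{P}|=n!$ is finite, so $\mathcal{P}$ is a finite, and hence regular, subset of $V^{*}$. By definition, $L_{\mathrm{perm}}=\mathcal{P}^{*}$ is the Kleene star of a regular language and is therefore regular. (One can also describe an explicit DFA for $L_{\mathrm{perm}}$: its states record which letters of $V$ have been seen so far within the current block of $n$ symbols, and it accepts precisely when the current block is complete.)

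Finally, since regular languages are closed under finite intersection, $L_{\pi} = L \cap L_{\mathrm{perm}}$ is regular, which proves the corollary.

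There is no real obstacle here; the only delicate point is confirming that the decomposition $L_{\pi} = L \cap L_{\mathrm{perm}}$ correctly captures the defining property of a permutational $1$-$11$-representation, which is a direct consequence of the definition together with Theorem \ref{thm1}. No new automaton construction is needed beyond the one already given for $L_{a,b}$ in Figure \ref{fig:dfa} and the trivial cyclic counter used implicitly for $L_{\mathrm{perm}}$.
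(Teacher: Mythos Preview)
Your proof is correct and follows essentially the same approach as the paper: express the target language as the intersection of the language of all $1$-$11$-representations (regular by Theorem~\ref{thm1}) with the language of concatenations of permutations of $V$, observe that the latter is regular since the finite set of permutations is regular and regular languages are closed under Kleene star, and conclude by closure under intersection. The only cosmetic difference is that the paper uses $P^{+}$ rather than $\mathcal{P}^{*}$, which is immaterial since every $1$-$11$-representation is nonempty.
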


\begin{proof}
Let $P$ denote the set of all permutations of $V$. Since $V$ is finite, $P$ is a finite language and therefore regular. The language $P^+$, consisting of all finite concatenations of permutations of $V$, is also regular.

Let $L'$ denote the language of all $1$-$11$-representations of $G$. A word is a permutational $1$-$11$-representation if and only if it belongs to both $P^+$ and $L'$. Since regular languages are closed under concatenation and intersection, the language of all permutational $1$-$11$-representations of $G$ is regular.
\end{proof}

\section{Conclusion}
In this paper, we studied repetition patterns in $1$-$11$-representations of graphs and showed that cube-free representations can always be obtained in the permutation framework, while square-free representations may be unavoidable. We also established that the language of all $1$-$11$-representations of a given graph, as well as the language of all permutational $1$-$11$-representations, is regular, highlighting a strong connection between graph representations and formal language theory.

\end{document}